\newcommand{\authorfootnotes}{\renewcommand\thefootnote{\@fnsymbol\c@footnote}}%
\newtheorem{theorem}{Theorem}[section]
\newtheorem{lemma}[theorem]{Lemma}
\newtheorem{proposition}[theorem]{Proposition}
\newtheorem{remark}[theorem]{Remark}
\numberwithin{equation}{section}
\begin{document}

\def\q{\mathfrak{q}}
\def\p{\mathfrak{p}}
\def\l{\mathfrak{l}}
\def\u{\mathfrak{u}}
\def\a{\mathfrak{a}}
\def\b{\mathfrak{b}}
\def\m{\mathfrak{m}}
\def\n{\mathfrak{n}}
\def\c{\mathfrak{c}}
\def\d{\mathfrak{d}}
\def\e{\mathfrak{e}}
\def\k{\mathfrak{k}}
\def\z{\mathfrak{z}}
\def\h{{\mathfrak h}}
\def\gl{\mathfrak{gl}}
\def\sl{\mathfrak{sl}}
\def\bk{\boldsymbol{k}}
\def\bl{\boldsymbol{l}}
\def\Ext{{\rm Ext}}
\def\Hom{{\rm Hom}}
\def\Ind{{\rm Ind}}

\def\res{\mathop{Res}}

\def\GL{{\rm GL}}
\def\SL{{\rm SL}}
\def\SO{{\rm SO}}
\def\O{{\rm O}}

\def\R{\mathbb{R}}
\def\C{\mathbb{C}}
\def\Z{\mathbb{Z}}
\def\Q{\mathbb{Q}}
\def\A{\mathbb{A}}

\def\w{\wedge}

\def\Cat{\mathcal{C}}
\def\HC{{\rm HC}}
\def\HCat{\Cat^\HC}
\def\proj{{\rm proj}}

\def\to{\rightarrow}
\def\To{\longrightarrow}

\def\1{1\!\!1}
\def\dim{{\rm dim}}

\def\th{^{\rm th}}
\def\isom{\approx}

\def\CE{\mathcal{C}\mathcal{E}}
\def\E{\mathcal{E}}

\def\dis{\displaystyle}
\def\f{{\bf f}}                 
\def\g{{\bf g}}
\def\T{{\rm T}}              
\def\omegatil{\tilde{\omega}}  
\def\H{\mathcal{H}}            
\def\Dif{\mathfrak{D}}      
\def\W{W^{\circ}}           
\def\Whit{\mathcal{W}}      
\def\ringO{\mathcal{O}}     
\def\S{\mathcal{S}}      
\def\M{\mathcal{M}}      
\def\K{{\rm K}}          
\def\h{\mathfrak{h}} 
\def\N{\mathfrak{N}}    
\def\norm{{\rm N}}       
\def\trace{{\rm Tr}} 
\def\ctilde{\tilde{C}}

\author{Alia Hamieh}
\address[Alia Hamieh]{University of Lethbridge, Department of Mathematics and Computer Science, C526 University Hall, 4401 University Drive, Lethbridge, AB T1K3M4, Canada}
\email{alia.hamieh@uleth.ca}

\author{Naomi Tanabe}
\address[Naomi Tanabe]{Dartmouth College, Department of Mathematics, 6188 Kemeny Hall, Hanover, NH 03755-3551, USA}
\email{naomi.tanabe@dartmouth.edu}

\thanks{Research of both authors was partially supported by Coleman Postdoctoral Fellowships at Queen's University}
\thanks{Research of the first author is currently supported by a PIMS Postdoctoral Fellowship at the University of Lethbridge}
\keywords{ Hilbert modular forms, Rankin-Selberg Convolution, Petersson Trace Formula}
\subjclass[2010]{Primary 11F41, 11F67; secondary 11F30, 11F11}

\title[Determining Hilbert Modular Forms: Level Aspect]
{Determining Hilbert Modular Forms by Central Values of Rankin-Selberg Convolutions: The Level Aspect}

\date{\today}

\begin{abstract}
In this paper, we prove that a primitive Hilbert cusp form $\g$ is uniquely determined by the central values of the Rankin-Selberg $L$-functions $L(\f\otimes\g, \frac{1}{2})$, where $\f$ runs through all primitive Hilbert cusp forms of level $\q$ for infinitely many prime ideals $\q$. This result is a generalization of the work of Luo \cite{luo2} to the setting of totally real number fields.
 \end{abstract}

\maketitle

\setcounter{section}{-1}

\section{Introduction}

An interesting question is to what extent the special values of automorophic $L$-functions determine the underlying automorphic forms. More precisely, several mathematicians have addressed the problem of identifying an automorphic form by the special values of the $L$-function of its twists by a family of automorphic forms (on $\GL_{1}$ or $\GL_{2}$). In particular, Luo and Ramakrishnan proved in an important paper \cite{luo-ramakrishnan} that two primitive cusp forms $g$ and $g'$  (on $\GL_2(\Q)$) must be equal if the special values $L(g\otimes\chi_{d},\frac{1}{2})$ and $L(g'\otimes\chi_{d},\frac{1}{2})$ are equal (up to a constant) for all but finitely many quadratic characters $\chi_{d}$. This result has been generalized by Chinta and Diaconu \cite{chinta-diaconu} to $\GL_{3}$-forms. It has also been genralized by Li \cite{li} to self-contragredient automorphic cuspidal representations of $\GL_2$ over any number field. 

Choosing for the twisting family the set of primitive forms of fixed even weight and infinitely varying level, Luo \cite{luo2} proved the following. Let $g$ and $g'$ be primitive cusp forms (over $\Q$) of even weights and general levels. Let $c$ be a constant and $k$ be a positive integer. If there exit infinitely many primes $p$ such that 
$$L\left(f\otimes g,\frac{1}{2}\right)=cL\left(f\otimes g',\frac{1}{2}\right)$$ 
for all primitive cusp forms $f$ of weight $2k$ and level $p$, then we have $g=g'$.

Ten years later, Ganguly, Hoffstein and Sengupta proved in \cite{ganguly-hoffstein-sengupta} an analogous result upon twisting by the family of primitive cusp forms of level 1 and weight $2k$ as $k$ tends to infinity. A similar result for determining modular forms of general level can be found in \cite{zhang}.

It is our purpose in this paper to extend Luo's approach in \cite{luo2} to the setting of an arbitrary totally real number field $F$. If the narrow class number of $F$ is greater than 1, one immediately confronts a number of difficulties, the most important of which is the lack of an action of Hecke operators under which the space of classical Hilbert modular forms (of given weight and level) is stable. In order to overcome this difficulty, we consider the larger space of ad\`elic Hilbert modular forms which unlike its classical counterpart, is invariant under the action of Hecke operators (see Section~\ref{sec:hmf}). 

In this paper, we prove the following theorem (the reader is referred to Sections 1 and 2 for notation and terminology).
\begin{theorem}\label{main2}
Let $\g\in S_{\bl}^{\mathrm{new}}(\n)$ and $\g'\in S_{\bl'}^{\mathrm{new}}(\n')$ be normalized Hilbert eigenforms, with the weights $\bl$ and $\bl'$ being in $2\mathbb{N}^n$. Let $\bk\in2\mathbb{N}^{n}$ be fixed, and suppose that there exist infinitely many prime ideals $\q$ such that 
$$L\left(\f\otimes \g,\frac{1}{2}\right)=L\left(\f\otimes \g',\frac{1}{2}\right)$$ for all normalized Hilbert eigenforms $\f\in S_{\bk}^{\mathrm{new}}(\q)$. Then $\g=\g'$.
\end{theorem}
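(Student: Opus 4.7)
The plan is to average the hypothesised identity $L(\f\otimes\g,\tfrac{1}{2})=L(\f\otimes\g',\tfrac{1}{2})$ against Hecke eigenvalues of $\f$, as $\f$ ranges over $S_\bk^{\mathrm{new}}(\q)$, and then to recover the Fourier coefficients of $\g$ and $\g'$ in the limit $\norm(\q)\to\infty$ along the given infinite sequence of primes. Specifically, for an integral ideal $\m$ coprime to the levels, I would introduce the harmonically weighted first moment
$$A_\q(\m;\g):=\sum_{\f\in S_\bk^{\mathrm{new}}(\q)}\omega_\f\,\overline{\lambda_\f(\m)}\,L\!\left(\f\otimes\g,\tfrac{1}{2}\right),$$
with $\omega_\f$ the natural Petersson weight proportional to $1/\langle\f,\f\rangle$, and similarly define $A_\q(\m;\g')$. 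By assumption, $A_\q(\m;\g)=A_\q(\m;\g')$ for infinitely many prime ideals $\q$.

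The next step is to substitute an approximate functional equation
$$L\!\left(\f\otimes\g,\tfrac{1}{2}\right)=\sum_\a\frac{\lambda_\f(\a)\,\lambda_\g(\a)}{\sqrt{\norm(\a)}}\,V\!\left(\tfrac{\norm(\a)}{Y}\right)+(\text{dual sum}),$$
with cutoff $Y$ of size $\norm(\q\n)$, and then interchange summations. The inner sum becomes a Petersson-type sum $\sum_\f \omega_\f\,\overline{\lambda_\f(\m)}\lambda_\f(\a)$ restricted to newforms of level $\q$. I would evaluate this using the adélic Petersson trace formula for Hilbert modular forms, combined with a Möbius inversion over the divisors of $\q$ to project onto the newform subspace; this inversion is especially clean because $\q$ is prime, so only the ideals $(1)$ and $\q$ contribute. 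The diagonal $\a=\m$ term yields a main contribution of shape $\lambda_\g(\m)/\sqrt{\norm(\m)}$ (up to an archimedean constant coming from $V$), while the off-diagonal and dual-sum pieces are expressed as Hilbert Kloosterman sums weighted by Bessel transforms of $V$.

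Using Weil-type bounds for Hilbert Kloosterman sums, together with the observation that every Kloosterman modulus arising from level $\q$ is divisible by $\q$, I would show that the off-diagonal and dual-sum contributions tend to $0$ as $\norm(\q)\to\infty$. Passing to the limit in $A_\q(\m;\g)=A_\q(\m;\g')$ then forces $\lambda_\g(\m)=\lambda_{\g'}(\m)$ for every $\m$ coprime to a fixed finite set of primes, and strong multiplicity one for cuspidal representations of $\GL_2$ over $F$ forces $\g=\g'$ (and incidentally $\bl=\bl'$ and $\n=\n'$).

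I expect the main technical obstacle to be the third step. First, one must set up a workable Petersson formula on $S_\bk^{\mathrm{new}}(\q)$ in the adélic Hilbert setting when the narrow class number of $F$ exceeds one, which forces the trace formula to be assembled across the components indexed by the class group. Second, one must obtain uniform off-diagonal bounds that genuinely decay in $\norm(\q)$: the savings from Weil's bound on Hilbert Kloosterman sums of modulus divisible by $\q$ must dominate the growth of the Bessel transform of $V$, whose level dependence enters through $Y\asymp\norm(\q\n)$ and whose archimedean behaviour is governed by the weights $\bk,\bl,\bl'$. Balancing these arithmetic and archimedean aspects is the Hilbert analogue of the core analysis in Luo~\cite{luo2} and is where most of the effort will lie.
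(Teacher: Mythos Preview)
Your overall strategy is exactly the paper's: twisted first moment, approximate functional equation, Petersson trace formula, Weil bound on the Kloosterman--Bessel error, and strong multiplicity one to finish. The handling of newforms (your M\"obius inversion versus the paper's explicit subtraction of the oldform contribution) is equivalent since $\q$ is prime.

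There is, however, a real oversight in your description of the diagonal term. You claim it yields $\lambda_\g(\m)/\sqrt{\norm(\m)}$ up to an \emph{archimedean} constant coming from $V$. But the Rankin--Selberg $L$-series includes the factor $\zeta_F^{\n\q}(2s)$, and after unfolding the approximate functional equation the diagonal becomes a contour integral containing $\zeta_F^{\n\q}(2u+1)/u$. This has a \emph{double} pole at $u=0$, so the main term is
\[
\frac{C_\g(\m)}{\sqrt{\norm(\m)}}\,\gamma_{-1}(F)\,A_\n\,\log\norm(\q)\;+\;O(1),
\qquad A_\n=\prod_{\l\mid\n}\bigl(1-\norm(\l)^{-1}\bigr),
\]
which grows with $\q$ and whose leading coefficient depends on the level $\n$ of $\g$, not only on archimedean data. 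Comparing the $\g$- and $\g'$-moments and dividing by $\log\norm(\q)$ therefore gives only $A_\n\,C_\g(\m)=A_{\n'}\,C_{\g'}(\m)$, which does not yet force equality of Fourier coefficients. The paper fixes this with one additional step you omit: first take $\m=\ringO_F$ (both forms are normalized, so $C_\g(\ringO_F)=C_{\g'}(\ringO_F)=1$) to deduce $A_\n=A_{\n'}$, and only then let $\m=\p$ run over primes coprime to $\n\n'$ to conclude $C_\g(\p)=C_{\g'}(\p)$ and invoke strong multiplicity one.
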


The proof of this theorem can be found in Section~\ref{sec:proof}. The idea is to show that the Fourier coefficients $C_{\g}(\p)$ and $C_{\g'}(\p)$ are equal for all but finitely many prime ideals $\p$. The result then follows by the strong multiplicity one theorem (cf. \cite[Chapter~3]{bump} and \cite{miyake}). We accomplish this by appealing to the technique used in \cite{ganguly-hoffstein-sengupta}, \cite{luo2} and \cite{luo-ramakrishnan}. Roughly speaking, our target is to express the coefficient $C_{\g}(\p)$ in terms of the central values $L\left(\f\otimes\g,\frac{1}{2}\right)$ up to an error term $E$. See Proposition~\ref{main-proposition}. Indeed, 
we show that
$$\sum_{\f}L\left(\f\otimes \g,\frac12\right)C_\f(\p)\omega_\f=\frac{C_{\g}(\p)}{\sqrt{\norm(\p)}}M + E,$$
where the sum is taken over all primitive forms of weight $\bk\in2\mathbb{N}^{n}$ and level $\q$ with $\norm(\q)$ being sufficiently large. 
This is done in Section~\ref{sec:petersson} upon applying an approximate functional equation established in Section~\ref{sec:approx}, together with a Petersson-type trace formula described in Section~\ref{trace-formula}. Finally, we prove that $M\sim C\log(\norm(\q))$ and $E=O(1)$ as $\norm(\q)\to\infty$. 
This computation is shown in Section~\ref{sec:average}.

As will be obvious in later sections, the proof entails several complications and subtleties arising from the technical nature of ad\`elic Hilbert modular forms over a totally real number field $F$ and the infinitude of the group of units in $F$. For example, in dealing with the error term $E$, we encounter a summation of the Kloosterman sum weighted by a product of the classical $J$-Bessel functions:  \begin{equation}\label{eqn:intro}\sum_{c\in\c^{-1}\q\backslash\{0\}/\ringO_{F}^{\times +}}\sum_{\eta\in\ringO_{F}^{\times+}}\frac{{\mathit Kl}(\nu,\a; \xi,\b;c\eta,\c)}{|\norm(c)|}\prod_{j=1}^{n}J_{k_j-1}\left(\frac{4\pi\sqrt{\nu_j\xi_j\left[\a\b\c^{-2}\right]_j}}{\eta_{j}|c_j|}\right).\end{equation} The new feature here is the sum over totally positive units, which originates from the application of a Petersson-type trace formula for Hilbert modular forms due to Trotabas \cite{trotabas}. In order to estimate (\ref{eqn:intro}), we employ a trick due to Luo \cite{luo1} which amounts to bounding the values of the classical $J$-Bessel function in such a way that the sum over units can be factored out as  $$\sum_{\eta\in\ringO_{F}^{\times+}}\prod_{\eta_{j}<1}\eta_{j}^{\delta},$$ which is convergent for all $\delta>0$ (see \cite[p.\:136]{luo1}). The reader is referred to Section~\ref{sec:error-term} for the details.

Another interesting problem to consider is the weight aspect analogue of the present work. 
In a separate paper \cite{hamieh-tanabe}, the authors obtain a result in this direction by following the line of argument developed in \cite{ganguly-hoffstein-sengupta} and \cite{zhang}.

\section{Notations and Preliminaries}\label{sec:background}
 \subsection{The Base Field}\label{sec:base-field} Let $F$ be a totally real number field of degree $n$ over $\mathbb{Q}$, and let $\ringO_{F}$ be its ring of integers. The real embeddings of $F$ are denoted by $\sigma_{j}:x\mapsto x_{j}:=\sigma_{j}(x)$ for $j=1,\dots,n$. Any element $x$ in $F$ may therefore be identified with the $n$-tuple $\boldsymbol{x}=(x_{1},\dots,x_{n})\in\mathbb{R}^{n}$. We say $x$ is totally positive and write $x\gg0$ if $x_{j}>0$ for all $j$, and for any subset $X\subset F$, we put $X^{+}=\{x\in X: x\gg0\}$. 

The trace and the norm of an element $x$ in the field extension $F/\mathbb{Q}$ are denoted by $\trace(x)$ and $\norm(x)$ respectively. The absolute norm of an integral ideal $\a$ is $\norm(\a)= [\ringO_{F}:\a]$. Notice that for a principal ideal $(\alpha)=\alpha\ringO_{F}$ of $\ringO_{F}$, we have $\norm\left((\alpha)\right)=|\norm(\alpha)|$. The absolute norm defined as such can be extended by multiplicativity to the group $I(F)$ of fractional ideals of $F$. The different ideal of $F$ and the discriminant of $F$ over $\mathbb{Q}$ are denoted by $\Dif_{F}$ and $d_{F}$ respectively, and we have the identity $\norm(\Dif_{F})=|d_{F}|$. 

Recall that the narrow class group of $F$ is the quotient group $Cl^{+}(F)=I(F)/P^{+}(F)$, where $P^{+}(F)$ is the group of principal ideals $(\alpha)$ with $\alpha\in F^{\times +}$. It is a finite group, and we denote its cardinality by $h_{F}^{+}$. We fix once and for all a system of representatives $\{\a\}$ of $Cl^{+}(F)$ in $I(F)$.  Given two fractional ideals $\a$ and $\b$, we write $\a\sim\b$ if there exists $\xi\in F^{\times +}$ such that $\a=\xi\b$ in which case we use the notation $[\a\b^{-1}]$ to denote the element $\xi$. Needless to say, if $\a\sim\b$, then $[\a\b^{-1}]\in F^{\times +}$ is only unique up to muliplication by totally positive units in $\ringO_{F}$. Let us now recall the following lemma (\cite[Lemma~2.1]{trotabas}) which we use in Section~\ref{sec:error-term}.
\begin{lemma}\label{normlemmatrotabas}
There exist constants $C_{1}$ and $C_{2}$ depending only on $F$ such that $$\forall \xi\in F,\exists\epsilon\in\ringO_{F}^{\times +},\forall j\in\{1,\dots,n\}: \quad C_{1}|\norm(\xi)|^{1/n}\leq|(\epsilon\xi)_{j}|\leq C_{2}|\norm(\xi)|^{1/n}.$$
\end{lemma}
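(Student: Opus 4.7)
The plan is to reduce the statement to a standard consequence of Dirichlet's unit theorem, namely the fact that the logarithmic image of the totally positive units is a full-rank lattice in the trace-zero hyperplane of $\mathbb{R}^n$.

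First I would dispose of $\xi=0$ (trivial) and assume $\xi\in F^\times$. Consider the logarithmic embedding $\ell:F^\times\to\mathbb{R}^n$, $\ell(x)=(\log|x_1|,\dots,\log|x_n|)$, and let $H=\{y\in\mathbb{R}^n:\sum_j y_j=0\}$. Since $F$ is totally real, Dirichlet's unit theorem says $\ell(\ringO_F^\times)$ is a full-rank lattice in $H$. The subgroup $\ringO_F^{\times+}$ has finite index in $\ringO_F^\times$ (the quotient injects into $\{\pm1\}^n$ via the signs of the real embeddings), so $L:=\ell(\ringO_F^{\times+})$ is again a full-rank lattice in $H$. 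Pick once and for all a bounded fundamental domain $\mathcal{F}\subset H$ for $L$, and set $M=\sup_{y\in\mathcal{F}}\max_j|y_j|$; this $M$ depends only on $F$.

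Next, given $\xi\in F^\times$, form the vector
\[
w(\xi)=\ell(\xi)-\tfrac{1}{n}\log|\norm(\xi)|\cdot(1,1,\dots,1).
\]
Its coordinates sum to $\log|\norm(\xi)|-\log|\norm(\xi)|=0$, so $w(\xi)\in H$. Because $L$ tiles $H$ by translates of $\mathcal{F}$, there exists $\epsilon\in\ringO_F^{\times+}$ with $\ell(\epsilon)+w(\xi)\in\mathcal{F}$. Unwinding definitions and using $|\norm(\epsilon)|=1$, we obtain
\[
\bigl|\log|(\epsilon\xi)_j|-\tfrac{1}{n}\log|\norm(\xi)|\bigr|\le M
\qquad (j=1,\dots,n),
\]
and exponentiating gives the claim with $C_1=e^{-M}$ and $C_2=e^{M}$.

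The only mild subtlety is verifying that $\ell(\ringO_F^{\times+})$ remains of full rank in $H$; this is immediate from the index argument above, since passing to a finite-index subgroup of a lattice does not change its rank. Everything else is a routine translation between multiplicative bounds on embeddings and additive bounds in $H$.
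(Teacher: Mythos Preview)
Your argument is correct and is the standard route to this lemma: reduce to the trace-zero hyperplane via the logarithmic embedding, use that $\ell(\ringO_F^{\times+})$ is a full-rank lattice there (finite index in $\ell(\ringO_F^{\times})$), and translate $w(\xi)$ into a bounded fundamental domain. The exponentiation step and the constants $C_1=e^{-M}$, $C_2=e^{M}$ are exactly right.

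Note that the paper does not actually supply its own proof of this lemma; it simply quotes it as \cite[Lemma~2.1]{trotabas}. So there is no ``paper's proof'' to compare against beyond the citation. Your proof is precisely the kind of argument one expects to find behind that citation, and nothing in it is specific to any particular source---it is the canonical application of Dirichlet's unit theorem to produce representatives whose archimedean absolute values are all comparable to $|\norm(\xi)|^{1/n}$.
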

Frequently in this paper, we make use of the multi-index notation. For example, given $n$-tuples $\boldsymbol{y}$ and $\boldsymbol{z}$ and a scalar $a$, we set $$\Gamma(\boldsymbol{z})=\prod_{j=1}^{n}\Gamma(z_{j}),\quad a^{\boldsymbol{z}}=a^{\sum_{j=1}^{n}z_{j}},\quad \boldsymbol{y}^{\boldsymbol{z}}=\prod_{j=1}^{n}y_{j}^{z_{j}}.$$Moreover, for any $x\in F$ and $\boldsymbol{u}\in\mathbb{N}^{n}$, we have $\displaystyle{J_{\boldsymbol{u}}(\boldsymbol{x})=\prod_{j=1}^{n}J_{u_j}(x_{j})}$ ,where $J_{u_{j}}$ is the classical $J$-Bessel function (see Section~\ref{sec:trace-formula}).

\subsection{Hilbert Modular Forms}\label{sec:hmf}
 We now fix some notation pertaining to the space of ad\`elic Hilbert modular forms.  To this end, we closely follow the exposition in \cite{trotabas}. Let $\mathbb{A}_{F}$ be the ad\`ele ring of $F$. For a place $v$ of $F$, we denote by $F_{v}$ the completion of $F$ at $v$ and by $\ringO_{F_{v}}$ the local ring of integers when $v<\infty$. Let $F_{\infty}=\prod_{v\in S_{\infty}}F_{v}$, where $S_{\infty}$ is the set of infinite places of $F$. In what follows, we make the identifications $$F_{\infty}=\mathbb{R}^{n},\quad \GL_{2}^{+}(F_{\infty})=\GL_{2}^{+}(\mathbb{R})^{n},\quad \SO_{2}(F_{\infty})=\SO_{2}(\mathbb{R})^{n},$$
where the superscript ``$+$" means the subgroup consisting of elements with totally positive determinants.
In particular, each $r\in \SO_{2}(F_{\infty})$ can be expressed as $$r(\boldsymbol{\theta})=\left(r(\theta_{j})\right)_{0\leq j\leq n}=\left(\left[ \begin{array}{cc}
\cos\theta_{j} & \sin\theta_{j}  \\
-\sin\theta_{j} & \cos\theta_{j} \end{array} \right]\right)_{0\leq j\leq n},$$ in which case we denote the $n$-tuple $(\theta_{1},\cdots,\theta_{n})$ by $\boldsymbol{\theta}$. 

Given an ideal $\n\subset\ringO_{F}$ and a non-archimedean place $v$ in $F$, we define the subgroup $K_{v}(\n)$ of $\GL_{2}(F_{v})$ as $$K_{v}(\n)=\left\{\left[ \begin{array}{cc}
a & b  \\
c & d \end{array} \right]\in \GL_{2}(\ringO_{F_{v}}): c\in \n \ringO_{F_{v}}\right\}.$$ Then we set $$K_{0}(\n)=\prod_{v<\infty}K_{v}(\n).$$ 

By an ad\`elic Hilbert cusp form $\f$ of weight $\bk\in2\mathbb{N}^{n}$ and level $\n$, we mean a complex-valued function on $\GL_{2}(\mathbb{A}_F)$ which satisfies the following properties (\cite[Definition~3.1]{trotabas}).
\begin{enumerate}
\item The transformation property $\f(\gamma zgr(\boldsymbol{\theta})u)=\f(g)\exp(i\boldsymbol{k}\boldsymbol{\theta})$ holds for all \\ 
$(\gamma, z,g,r(\boldsymbol{\theta}),u)\in \GL_{2}(F)\times\mathbb{A}_{F}^{\times}\times \GL_{2}(\mathbb{A}_{F}) \times \SO_{2}(F_{\infty})\times K_{0}(\n)$.
\item Viewed as a smooth function on $\GL_{2}^{+}(F_{\infty})$, $\f$ is an eigenfunction of the Casimir element $\boldsymbol{\Delta}:=(\Delta_{1},\cdots,\Delta_{n})$ with eigenvalue $\displaystyle{\prod_{j=1}^{n}\frac{k_{j}}{2}\left(1-\frac{k_{j}}{2}\right)}$.
\item We have $\displaystyle{\int_{F\backslash\mathbb{A}_{F}}\f\left(\left[ \begin{array}{cc}
1 & x  \\
0 & 1 \end{array} \right]g\right)dx=0}$ for all $g\in \GL_{2}(\mathbb{A}_{F})$ (cuspidality condition).
\end{enumerate} 

Notice that, in our definition, all the forms are understood to have the trivial character.
We denote by $S_{\bk}(\n)$ the space of ad\`elic Hilbert cusp forms of weight $\bk$ and level $\n$. 
It is also worth noting that this space is trivial if $\bk\not\in 2\mathbb{N}^n$. This is why we impose the parity condition, $\bk\in 2\mathbb{N}^n$, on the weight vectors throughout the paper.

\begin{remark}
It follows from the strong approximation theorem for $\GL_{2}$ that an ad\`elic Hilbert cusp form $\f$ can be viewed as an $h_{F}^{+}$-tuple $(f_{1},...,f_{h_{F}^{+}})$ of classical Hilbert modular forms on $\mathbb{H}^{n}$. More details on this correspondence between the ad\`elic setting and the classical setting can be found in most references on the topic of Hilbert modular forms among which are \cite[Chapter 1, 2]{garrett}, \cite[Section 4]{JRMS-2011}, and \cite[Section 2]{shimura-duke}.
\end{remark}

The subspace of oldforms in $S_{\bk}(\n)$ is denoted by $S_{\bk}^{\mathrm{old}}(\n)$. Roughly speaking, this is the subspace of cusp forms ``obtained'' from cusp forms of lower levels. The orthogonal complement of $S_{\bk}^{\mathrm{old}}(\n)$ with respect to the inner product $$\left<\f,\bf{h}\right>_{S_{\bk}(\n)}=\int_{\GL_{2}(F)\mathbb{A}_{F}^{\times}\backslash \GL_{2}(\mathbb{A}_{F})/K_{0}(\n)}\f(g)\overline{{\bf{h}}(g)}\;dg$$  is referred to as the space of newforms and is denoted by $S_{\bk}^{\mathrm{new}}(\n)$. 

For a Hilbert cusp form $\f$, let $\left\{C(\nu,\a,\f)\right\}_{\nu\in\a^{-1}\Dif_{F}^{-1}}$ be the coefficients given by the Fourier expansion: 
$$\f\left(g\left[ \begin{array}{cc}
\mathrm{id}(\a) & 0  \\
0 & 1 \end{array} \right] \right)=\sum_{\substack{\nu\in\a^{-1}\Dif_{F}^{-1}\\\nu\gg0}}\frac{C(\nu,\a,\f)}{\norm(\nu\a\Dif_{F})^{\frac{1}{2}}}W_{\infty}^{0}\left(\left[\begin{array}{cc}
\boldsymbol{\nu} & 0  \\
0 & 1 \end{array} \right] g\right),\quad\quad g\in \GL_{2}^{+}(F_{\infty}).$$
We mention here that $\mathrm{id}(\a)$ is the idele of $F$ associated with the ideal $\a$, and that $W_{\infty}^{0}$ is the new vector in the Whittaker model of the discrete series representation $\bigotimes_{j}\mathcal{D}(k_{j}-1)$ of $\GL_{2}(F_{\infty})$ (restricted to $\GL_{2}^{+}(F_{\infty})$). In fact, $W_{\infty}^{0}(g)$ for $g\in \GL_{2}^{+}(F_{\infty})$ can be calculated as follows. By the Iwasawa decomposition, we know that $g$ can be uniquely expressed as $$g=\left[ \begin{array}{cc}
\boldsymbol{z} & 0  \\
0 & \boldsymbol{z} \end{array} \right]\left[ \begin{array}{cc}
1 & \boldsymbol{x}  \\
0 & 1 \end{array} \right]\left[ \begin{array}{cc}
\boldsymbol{y} & 0  \\
0 & 1 \end{array} \right]r(\boldsymbol{\theta}),$$ 
with $\boldsymbol{z},\boldsymbol{y}\in F_{\infty}^{\times+}$, $\boldsymbol{x}\in F_{\infty}$ and $r(\boldsymbol{\theta})\in \SO_{2}(F_{\infty})$. Then we have $$W_{\infty}^{0}(g)=\boldsymbol{y}^{\bk/2}\exp\left(2i\pi (\boldsymbol{x}+i\boldsymbol{y})\right)\exp\left(i\bk\boldsymbol{\theta}\right)$$

Let $\m\subset\ringO_{F}$ be an ideal. The Fourier coefficient of $\f$ at $\m$ is denoted by $C_{\f}(\m)$ and defined as follows. We write $\m=\nu\a$ for some narrow ideal class representative $\a$ and some totally positive element $\nu\in\a^{-1}$. Then we set \begin{equation}\label{eqn:fc}C_{\f}(\m)=C(\nu,\a\Dif_{F}^{-1},\f).\end{equation} We say $\f$ is normalized if $C_{\f}(\ringO_{F})=1$. 

Much like the classical setting over $\mathbb{Q}$, one can define an action of Hecke operators $\{T_{\m}\}_{\m\subset\ringO_{F}}$ on the space $S_{\bk}(\n)$ (see, for example, \cite[Section 4]{JRMS-2011}). A Hilbert cusp form $\f$ is said to be primitive if it is a normalized newform and a common eigenfunction for all the Hecke operators. It is known that, for such a form $\f$, the coefficients $C_\f(\m)$ coincide with the Hecke eigenvalues for $T_\m$ for all $\m$ (see \cite[p.\:650]{shimura-duke}).
We denote by $\Pi_{\bk}(\n)$ the (finite) set of all primitive forms of weight $\bk$ and level $\n$. It follows from a standard result of Shimura \cite{shimura-duke} that the coefficients $C_{\f}(\m)$ are real for all $\m$ if $\f$ is a primitive form (because the trivial character is assumed).
It is also worth mentioning that $\Pi_{\bk}(\n)$ can be viewed as the set of all cuspidal automorphic representations $\pi$ of conductor $\n$ with the trivial central character such that $\pi_{\infty}=\bigotimes_{j}\mathcal{D}(k_{j}-1)$.

\subsection{Petersson Trace Formula}\label{sec:trace-formula}
Crucial to our work is a Petersson trace formula for Hilbert modular forms due to Trotabas (see \cite[Theorem~ 5.5, Proposition~6.3]{trotabas}), which we state below.
\begin{proposition}\label{trace-formula}
Let $\q$ be an integral ideal in $F$. Let $\a$ and $\b$ be fractional ideals in $F$. For $\alpha\in\a^{-1}$ and $\beta\in\b^{-1}$, we have \begin{align*} &\sum_{\f\in H_{\bk}(\q)}\frac{\Gamma(\bk-\boldsymbol{1})}{(4\pi)^{\bk-\boldsymbol{1}}|d_{F}|^{1/2}\left<\f,\f\right>_{S_{\bk}(\q)}}C_{\f}(\alpha\a)\overline{C_{\f}(\beta\b)}\\
&\hspace{.6in}=\1_{\alpha\a=\beta\b}+C\sum_{\substack{\mathfrak{c}^{2}\sim\a\b\\c\in\c^{-1}\q\backslash\{0\}\\\epsilon\in\mathcal{O}_{F}^{\times+}/\mathcal{O}_{F}^{\times2}}}\frac{{\mathit Kl}(\epsilon\alpha,\a; \beta,\b;c,\c)}{\norm(c\c)}J_{\bk-\boldsymbol{1}}\left(\frac{4\pi\sqrt{\boldsymbol{\epsilon}\boldsymbol{\alpha}\boldsymbol{\beta}\boldsymbol{\left[\mathfrak{a}\mathfrak{b}\mathfrak{c}^{-2}\right]}}}{|\boldsymbol{c}|}\right),\end{align*}
where $\dis{C=\frac{(-1)^{\bk/2}(2\pi)^{n}}{2|d_{F}|^{1/2}}}$ and $H_{\bk}(\q)$ is an orthogonal basis for the space $S_{\bk}(\q)$.
\end{proposition}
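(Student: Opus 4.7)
The strategy is the classical Petersson argument adapted to the adelic Hilbert modular setting. For each pair $(\alpha,\a)$ with $\alpha\in\a^{-1}$ totally positive, I would build an adelic Poincaré series $P_{\alpha,\a}\in S_{\bk}(\q)$ by averaging over $B(F)\backslash\GL_{2}(F)$ a seed function constructed from the new Whittaker vector $W_{\infty}^{0}$ at the archimedean places, the characteristic function of $K_{0}(\q)$ at the finite places (twisted by the idele of $\a$), and the standard additive character of $\A_{F}$. Unfolding the inner product $\langle\f,P_{\alpha,\a}\rangle_{S_{\bk}(\q)}$ against the defining sum shows that $P_{\alpha,\a}$ is a constant multiple of the reproducing kernel for the Fourier coefficient $C_{\f}(\alpha\a)$, the constant being $\Gamma(\bk-\boldsymbol{1})/\bigl((4\pi)^{\bk-\boldsymbol{1}}|d_{F}|^{1/2}\bigr)$. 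Combined with the spectral expansion of $P_{\beta,\b}$ in the orthogonal basis $H_{\bk}(\q)$, this expresses the left-hand side of the proposition as a normalized Petersson inner product of two Poincaré series.

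The right-hand side is then obtained by computing the same inner product geometrically. I would collapse one Poincaré sum against the Fourier expansion of the other and apply the Bruhat decomposition $\GL_{2}(F)=B(F)\sqcup B(F)wB(F)$. The Borel cell produces the diagonal term $\1_{\alpha\a=\beta\b}$, reflecting the orthogonality of additive characters on $F\backslash\A_{F}$. Orbits in the big cell are parametrized by $c$ lying in $\c^{-1}\q$ for a narrow ideal class representative $\c$, and the condition that the Bruhat data assemble into a class is exactly $\c^{2}\sim\a\b$. At each non-archimedean place the finite integral gives a local Kloosterman factor; taking the product over places produces the global Kloosterman sum ${\mathit Kl}(\epsilon\alpha,\a;\beta,\b;c,\c)$. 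At each archimedean place, the unipotent integral of two Whittaker functions evaluates to a $J$-Bessel transform, giving the factor $J_{k_{j}-1}\bigl(4\pi\sqrt{\epsilon_{j}\alpha_{j}\beta_{j}[\a\b\c^{-2}]_{j}}/|c_{j}|\bigr)$, and the combined measure normalizations produce the constant $C=(-1)^{\bk/2}(2\pi)^{n}/(2|d_{F}|^{1/2})$.

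The principal obstacle lies in the arithmetic bookkeeping. The generator $[\a\b\c^{-2}]\in F^{\times+}$ of the principal ideal $\a\b\c^{-2}$ is only determined modulo squares of totally positive units, so summing over $\epsilon\in\ringO_{F}^{\times+}/\ringO_{F}^{\times 2}$ is forced by counting Bruhat orbits of $\GL_{2}(F)$ correctly. Identifying this quotient --- rather than, say, all of $\ringO_{F}^{\times}$ or all of $\ringO_{F}^{\times+}$ --- and ensuring that the remaining unit degrees of freedom are absorbed into the $c$-sum and the $\c$-sum is where the narrow class group and the infinitude of $\ringO_{F}^{\times}$ make the derivation genuinely more intricate than over $\Q$. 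Beyond this, the delicate verification is matching the explicit normalization constants $\Gamma(\bk-\boldsymbol{1})$, $(4\pi)^{\bk-\boldsymbol{1}}$, $|d_{F}|^{1/2}$ and $(2\pi)^{n}$ across the spectral and geometric sides, which requires careful tracking of local measure normalizations at every place.
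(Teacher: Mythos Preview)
Your sketch is a reasonable high-level outline of the standard Petersson argument (adelic Poincar\'e series, unfolding, Bruhat decomposition, local Kloosterman and Bessel integrals), and this is indeed how such trace formulas are derived. However, the paper does not actually prove this proposition: it is stated as a known input and attributed directly to Trotabas \cite[Theorem~5.5, Proposition~6.3]{trotabas}. The authors simply quote the result and then recall the definitions of the $J$-Bessel function and the Kloosterman sum that appear in it, together with the Weil bound. So there is no in-paper proof to compare your proposal against; what you have written is essentially a condensed synopsis of the construction Trotabas carries out in the cited reference.
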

For convenience, we now recall the definition of the $J$-Bessel function and the Kloosterman sum which appear in the proposition above. The $J$-Bessel function is defined via the Mellin-Barnes integral reprsentation as
$$J_{u}(x)=\int_{(\sigma)}\frac{\Gamma\left(\frac{u-s}{2}\right)}{\Gamma\left(\frac{u+s}{2}+1\right)}\left(\frac{x}{2}\right)^{s}\;ds\quad\quad 0<\sigma<\Re(u).$$ 
It is known that for $u\in\mathbb{N}$ and $x>0$, we have $J_{u}(x)\ll\mathrm{min}(1,x^{u})\leq x^{\lambda}$, if $0\leq\lambda\leq u$ (see \cite[p.\:952]{gradshteyn-ryzhik}). In particular, we have the following bound which we use in Section~\ref{sec:error-term}:
\begin{equation}\label{J-Bessel0}
J_{u}(x)\ll x^{1-\delta},\quad\quad \text{if }\;0\leq\delta\leq1.\end{equation} 
 As for the Kloosterman sum, it is defined as follows. Given two fractional ideals $\a$ and $\b$ , let $\c$ be an ideal such that $\c^{2}\sim\a\b$. For $\nu\in\a^{-1}$, $\xi\in\b^{-1}$ and $c\in\c^{-1}\q$, the Kloosterman sum $\mathit{Kl}(\nu,\a;\xi,\b;c,\c)$ is given by 

\[{\mathit Kl}(\nu,\a; \xi,\b;c,\c)=\sum_{x\in \left(\a\Dif_{F}^{-1}\c^{-1}/\a\Dif_{F}^{-1}c\right)^{\times}}\exp\left(2\pi i\trace\left(\frac{\nu x+\xi\left[\a\b\c^{-2}\right]\overline{x}}{c}\right)\right).\]
Here $\overline{x}$ is the unique element in $\left(\a^{-1}\Dif_{F}\c/\a^{-1}\Dif_{F}c\c^{2}\right)^{\times}$ such that $x\overline{x}\equiv 1\mod c\c$. The reader is refered to Section 2.2 and Section 6 in \cite{trotabas} for more details on this construction. The Kloosterman sum satisfies the following bound (Weil bound): 
\begin{equation}\label{weilbound}
|\mathit{Kl}(\alpha,\n;\beta,\m;c,\c)|\ll_{F}\norm\left(((\alpha)\n,(\beta)\m,(c)\c)\right)^{\frac{1}{2}}\tau((c)\c)\norm(c\c)^{\frac{1}{2}},\end{equation} 
where $(\a,\b,\c)$ is the g.c.d of the ideals $\a$, $\b$, $\c$, and $\tau(\m)=|\{\d\subset \ringO_{F}:\m\d^{-1}\subset\ringO_{F}\}|$ for any integral ideal $\m$. Another useful fact is the well-known estimate: for all $\epsilon>0$, we have
\begin{equation}\label{eqn:tau}
\tau(\n)\ll_{\epsilon}\norm(\n)^{\epsilon}.
\end{equation}

\section{Rankin-Selberg Convolution}
In this section, we recall the construction of Rankin-Selberg convolutions of two Hilbert modular forms following Shimura~\cite[Section~4]{shimura-duke}. It should be noted, however, that our normalization differs slightly from what Shimura uses. Let ${ \f\in S_{\bk}^{\mathrm{new}}(\q)}$ and  ${\g\in S_{\boldsymbol{l}}^{\mathrm{new}}(\n)}$ be primitive forms, where we assume that $\q$ and $\n$ are coprime integral ideals. 
The $L$-series for the associated Rankin-Selberg convolution is defined as \begin{equation}\label{eqn:rankin-selberg}L(\f\otimes \g,s)=\zeta_{F}^{\n\q}(2s)\sum_{\m\subset \ringO_{F}}\frac{C_{\f}(\m)C_{\g}(\m)}{\norm(\m)^{s}},\end{equation}
where the coefficients $C_{\f}(\m)$ and $C_{\g}(\m)$ are as defined in (\ref{eqn:fc}) and 
$$\zeta_{F}^{\n\q}(2s)=\zeta_{F}(2s)\prod_{\substack{\l|\n\q \\ \l \,:\,\mathrm{prime}}}(1-\norm(\l)^{-2s})=\sum_{d=1}^{\infty}\frac{a_{d}^{\n\q}}{d^{2s}}.$$
Here $a_d^{\n\q}$ denotes the number of ideals in $\ringO_F$ that are coprime to $\n\q$ and whose norm is $d$. 
The Ramanujan bound for Hilbert modular forms (proven by Blasius \cite{blasius}) asserts that for all $\epsilon>0$, we have
\begin{equation}\label{eqn:ramanujan-bound}
C_{\f}(\m)\ll_{\epsilon}\norm(\m)^{\epsilon}\quad\text{and}\quad C_{\g}(\m)\ll_{\epsilon}\norm(\m)^{\epsilon}.
\end{equation}
In view of this bound, it is easy to see that the series (\ref{eqn:rankin-selberg}) is absolutely convergent for $\Re(s)>1$. Notice that we can write $$L(\f\otimes \g,s)=\sum_{m=1}^\infty\frac{b_{m}^{\n\q}(\f\otimes\g)}{m^s},$$
with
$$ b_{m}^{\n\q}(\f\otimes\g)=\sum_{d^2|m}\left(a_d^{\n\q}\sum_{\norm(\m)=m/d^2}C_\f(\m)C_\g(\m)\right).$$
Let
$$\Lambda(\f\otimes \g,s)=\norm(\Dif_{F}^{2}\n\q)^{s}L_{\infty}(\f\otimes\g,s)L(\f\otimes \g,s),$$where $$L_{\infty}(\f\otimes\g,s)=\prod_{j=1}^n(2\pi)^{-2s-\mathrm{max}\{k_{j}, l_j\}}\Gamma\left(s+\frac{|k_j-l_j|}{2}\right)\Gamma\left(s-1+\frac{k_j+l_j}{2}\right).$$ Then $\Lambda(\f\otimes \g,s)$ admits an analytic continuation to $\mathbb{C}$ as an entire function (unless $\f=\g$) and satisfies the functional equation (see \cite{prasad-ramakrishnan})\begin{equation}\label{functional-equation}\Lambda(\f\otimes\g,s)=\Lambda(\f\otimes \g,1-s).\end{equation}

\section{Proof of Main Result}\label{sec:proof}
Let $\g\in S_{\boldsymbol{l}}^{\mathrm{new}}(\n)$ be a primitive form, and let $\p$ be either $\ringO_{F}$ or a prime ideal. Let $\q$ be a prime ideal that is relatively prime to $\n$ and $\p$. The main object of interest in this paper is the twisted first moment 
\begin{equation}\label{eqn:first-moment}\sum_{\f\in\Pi_{\bk}(\q)}L\left(\f\otimes \g,\frac12\right)C_\f(\p)\omega_\f,\end{equation} 
where $\Pi_{\boldsymbol{k}}(\q)$ is the set of all primitive forms of weight $\boldsymbol{k}$ and level $\q$, and $w_{\f}$ is defined as 
\begin{equation}\label{eqn:weight}
w_{\f}=\frac{\Gamma(\bk-\boldsymbol{1})}{(4\pi)^{\bk-\boldsymbol{1}}|d_{F}|^{1/2}\left<\f,\f\right>_{S_{\bk}(\q)}}.
\end{equation} 

The following proposition, which we prove in Section~\ref{sec:approx} through Section~\ref{sec:average}, gives an asymptotic formula for this moment in the level aspect ($\norm(\q)\to\infty$). 
\begin{proposition}\label{main-proposition}
Let $\g\in S_{\boldsymbol{l}}^{\mathrm{new}}(\n)$ be a primitive form, and let $\p$ be either $\ringO_{F}$ or a prime ideal. For all prime ideals $\q$ with $\norm(\q)$ sufficiently large, we have 
$$\sum_{\f\in\Pi_{\bk}(\q)}L\left(\f\otimes \g,\frac12\right)C_\f(\p)\omega_{\f}=\frac{C_{\g}(\p)}{\sqrt{\norm(\p)}}\gamma_{-1}(F)A_{\n}\log(\norm(\q))+O(1),$$ 
where $\displaystyle{\gamma_{-1}(F)=2\res_{u=0}\zeta_{F}(2u+1)}$ and $\displaystyle{A_{\n}=\prod_{\substack{\l|\n \\ \l \,:\,\mathrm{prime}}}(1-\norm(\l)^{-1})}$.
\end{proposition}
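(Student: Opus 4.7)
The proof follows the roadmap sketched in the introduction, combining an approximate functional equation (AFE) with the Petersson trace formula (Proposition~\ref{trace-formula}). First, starting from the functional equation (\ref{functional-equation}) and a standard contour-shift argument, I would establish in Section~\ref{sec:approx} an AFE of the shape
$$L\!\left(\f\otimes\g,\tfrac12\right) = \sum_{m\geq 1}\frac{b_m^{\n\q}(\f\otimes\g)}{\sqrt{m}}\,V_1\!\left(\frac{m}{X}\right) + \varepsilon_{\f\otimes\g}\sum_{m\geq 1}\frac{b_m^{\n\q}(\f\otimes\g)}{\sqrt{m}}\,V_2\!\left(\frac{m}{X}\right),$$
with smooth rapidly-decaying test functions $V_i$, root number $\varepsilon_{\f\otimes\g}$, and cutoff $X$ comparable to $\sqrt{\norm(\Dif_{F}^{2}\n\q)}$; in particular $X\asymp\norm(\q)$ in the level aspect.

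Inserting this AFE into the first moment (\ref{eqn:first-moment}) and expanding $b_m^{\n\q}(\f\otimes\g)$ via $a_d^{\n\q}$ and $C_\f(\m)C_\g(\m)$ reduces the problem to evaluating $\sum_{\f\in\Pi_{\bk}(\q)}C_\f(\p)C_\f(\m_1)\omega_\f$ for various ideals $\m_1$. Using the Hecke relation
$$C_\f(\p)C_\f(\m_1) = C_\f(\p\m_1) + \1_{\p\mid\m_1}\,C_\f(\m_1\p^{-1})$$
(valid since $(\p,\q)=\ringO_{F}$ and the central character is trivial), this further reduces to sums of the form $\sum_\f C_\f(\mathfrak r)\omega_\f = \sum_\f C_\f(\mathfrak r)\overline{C_\f(\ringO_{F})}\omega_\f$, on which Proposition~\ref{trace-formula} applies once we replace $\Pi_{\bk}(\q)$ by the orthogonal basis $H_{\bk}(\q)$. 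Since $\q$ is prime and coprime to $\n$, the only oldforms in $S_{\bk}(\q)$ come from level $\ringO_{F}$, and their contribution to the first moment is easily shown to be $O(1)$.

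The diagonal term in Proposition~\ref{trace-formula} is supported precisely on $\p\m_1=\ringO_{F}$ (i.e., $\p=\m_1=\ringO_{F}$) or $\m_1\p^{-1}=\ringO_{F}$ (i.e., $\m_1=\p$), each contributing a factor $C_\g(\p)/\sqrt{\norm(\p)}$ times
$$\sum_{d\geq 1}\frac{a_d^{\n\q}}{d}\,V_i\!\left(\frac{d^2\norm(\p)}{X}\right).$$
To extract the asymptotic of the main term $M$, I would write this $d$-sum as a vertical-line Mellin integral of $\widehat{V}_i(u)\,(X/\norm(\p))^u\,\zeta_F^{\n\q}(2u+1)$ and shift the line past the simple pole of $\zeta_F(2u+1)$ at $u=0$. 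The residue produces $\tfrac12\gamma_{-1}(F)A_{\n\q}\log(X/\norm(\p))\cdot\widehat{V}_i(0)$ plus bounded terms; combining the two AFE pieces, using $X\asymp\norm(\q)$, and noting that $A_{\n\q} = A_\n + O(\norm(\q)^{-1})$, one recovers the claimed main term $\frac{C_\g(\p)}{\sqrt{\norm(\p)}}\gamma_{-1}(F)A_\n\log(\norm(\q))$. These calculations will be carried out in Section~\ref{sec:average}.

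I expect the main obstacle to be bounding the error term $E$ coming from the Kloosterman contribution. After the above reductions, $E$ becomes a sum of the shape (\ref{eqn:intro}), nested over $c\in\c^{-1}\q\setminus\{0\}/\ringO_{F}^{\times+}$, totally positive units $\eta\in\ringO_{F}^{\times+}$, the AFE variable $m$, and the ideals $\m_1$, weighted by products of $J$-Bessel factors $J_{k_j-1}(4\pi\sqrt{\nu_j\xi_j[\a\b\c^{-2}]_j}/(\eta_j|c_j|))$. Following Luo's trick from \cite{luo1}, I would bound each Bessel factor via (\ref{J-Bessel0}) with an exponent $\delta_j\in(0,1)$ chosen so that, after splitting the indices $j$ by the sign of $\log\eta_j$ and invoking Lemma~\ref{normlemmatrotabas} to normalize $|\norm(c)|$ against the individual $|c_j|$, the $\eta$-dependence decouples from the $c$-dependence and factors through the convergent series $\sum_{\eta\in\ringO_{F}^{\times+}}\prod_{\eta_j<1}\eta_j^{\delta_j}$. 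The remaining $c$-sum is then controlled by the Weil bound (\ref{weilbound}), the divisor bound (\ref{eqn:tau}), and the Ramanujan bound (\ref{eqn:ramanujan-bound}). A careful optimization of the $\delta_j$ then yields the required power-of-$\norm(\q)$ savings, giving $E=O(1)$ as $\norm(\q)\to\infty$; the details will occupy Section~\ref{sec:error-term}.
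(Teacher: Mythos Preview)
Your proposal is correct and follows essentially the same architecture as the paper: approximate functional equation, Petersson trace formula, residue computation for the diagonal term, and Luo's unit-sum trick plus Weil/Ramanujan bounds for the Kloosterman error, with oldforms handled separately.

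Two places where the paper is more streamlined than your outline are worth noting. First, since the functional equation~(\ref{functional-equation}) is $\Lambda(s)=\Lambda(1-s)$ with root number $+1$, the paper's AFE (Proposition~\ref{approximate-functional-equation}) collapses to a single sum with a factor of $2$ and a single test function $V_{1/2}$; your two-term AFE with $\varepsilon_{\f\otimes\g}$ is not wrong, just redundant here (and your parenthetical ``$X$ comparable to $\sqrt{\norm(\Dif_F^2\n\q)}$'' is a slip --- the effective length is $\asymp\norm(\Dif_F^2\n\q)\asymp\norm(\q)$, which you do use correctly afterwards). Second, the paper applies Proposition~\ref{trace-formula} \emph{directly} to $C_\f(\m)\overline{C_\f(\p)}$ with $\alpha\a=\m$ and $\beta\b=\p$, so the diagonal is simply $\1_{\m=\p}$; your detour through the Hecke relation $C_\f(\p)C_\f(\m_1)=C_\f(\p\m_1)+\1_{\p\mid\m_1}C_\f(\m_1\p^{-1})$ is valid but unnecessary, since the trace formula already accommodates two arbitrary integral ideals. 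Both routes land on the same main and error terms.
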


In what follows, we prove Theorem~\ref{main2} assuming Proposition~\ref{main-proposition}. Let $\g\in S_{\boldsymbol{l}}^{\mathrm{new}}(\n)$ and $\g\in S_{\boldsymbol{l'}}^{\mathrm{new}}(\n')$ be primitive forms. Let $\bk\in2\mathbb{N}^{n}$ be fixed and suppose that there exist infinitely many prime ideals $\q$ such that $L(\f\otimes \g, 1/2)=L(\f\otimes \g',1/2)$ for all $\f\in\Pi_{\bk}(\q)$. Applying Proposition~\ref{main-proposition} with $\p=\ringO_{F}$, we see that
$A_{\n}=A_{\n'}$. We then apply the proposition with $\p$ being any prime ideal not dividing $\n\n'$ to get $C_{\g}(\p)=C_{\g'}(\p)$. Hence, the Hecke eigenvalues of $\g$ and $\g'$ for $T_\p$ are equal (see the last paragraph of Section~\ref{sec:hmf}), and it follows by the strong multiplicity one theorem that $\g=\g'$.

\section{Approximate Functional Equation}\label{sec:approx}
In this section, we establish the approximate functional equation which allows us to write the central value $L\left(\f\otimes\g,\frac{1}{2}\right)$ in terms of a rapidly decaying series built from the Fourier coefficients of $\f$ and $\g$. 

For $X>0$, we put
\[I(s, X)=\frac{1}{2\pi i}\int_{(3/2)}X^u\Lambda(\f\otimes \g,s+u)G(u)\frac{du}{u},\]
where $G(u)$ is a holomorphic function on an open set containing the strip $|\Re(u)|\leq 3/2$. We require that it is bounded and satisfies $G(u)=G(-u)$ and $G(0)=1$. 

By the residue theorem, we have 
\begin{equation*}
\Lambda(\f\otimes \g,s)=\frac{1}{2\pi i}\int_{\left(3/2\right)}X^u\Lambda(\f\otimes \g,s+u)G(u)\frac{du}{u}-\frac{1}{2\pi i}\int_{\left(-3/2\right)}X^u\Lambda(\f\otimes\g,s+u)G(u)\frac{du}{u}.
\end{equation*}
In the last integral, we apply the change of variable $u\mapsto-u$ followed by the functional equation (\ref{functional-equation}) to get
\begin{eqnarray*}
\Lambda(\f\otimes \g,s)=I(s,X)+I(1-s,X^{-1}).
\end{eqnarray*}

On the other hand, $I(s,X)$ can be written as
\begin{align*}
I(s,X)&=\frac{1}{2\pi i}\norm(\Dif_{F}^{2}\n\q)^{s}\sum_{m=1}^\infty \frac{b_{m}^{\n\q}(\f\otimes\g)}{m^s}\int_{\left(3/2\right)}\left(\frac{X\norm(\Dif_{F}^{2}\n\q)}{m}\right)^{u}L_\infty(\f\otimes \g,s+u)G(u)\frac{du}{u}\\
&=\frac{1}{2\pi i}\norm(\Dif_{F}^{2}\n\q)^{s}L_\infty(\f\otimes \g,s)\sum_{m=1}^\infty \frac{b_{m}^{\n\q}(\f\otimes\g)}{m^s}\int_{\left(3/2\right)}\left(\frac{X\norm(\Dif_{F}^{2}\n\q)}{4^n\pi^{2n}m}\right)^u\gamma(s,u)G(u)\frac{du}{u},
\end{align*}
where 
\begin{equation}\label{eqn:gamma} 
\gamma(s,u)=\prod_{j=1}^n\frac{\Gamma\left(s+u+\frac{|k_j-l_j|}{2}\right)\Gamma\left(s+u-1+\frac{k_j+l_j}{2}\right)}{\Gamma\left(s+\frac{|k_j-l_j|}{2}\right)\Gamma\left(s-1+\frac{k_j+l_j}{2}\right)}.\end{equation}
For $y>0$, we define
\begin{equation}\label{eqn:v}
V_s(y):=\frac{1}{2\pi i}\int_{(3/2)}y^{-u}\gamma(s,u)G(u)\frac{du}{u}.
\end{equation}
Then we can write
\[ I(s,X)= \norm(\Dif_{F}^{2}\n\q)^{s}L_\infty(\f\otimes\g,s)\sum_{m=1}^\infty \frac{b_{m}^{\n\q}(\f\otimes\g)}{m^s} V_s\left(\frac{4^n\pi^{2n}m}{X\norm(\Dif_{F}^{2}\n\q)}\right).\]
We also observe that 
\[ I(1-s,X^{-1})=\norm(\Dif_{F}^{2}\n\q)^{1-s}L_\infty(\f\otimes \g,1-s)\sum_{m=1}^\infty\frac{b_{m}^{\n\q}(\f\otimes\g)}{m^{1-s}}V_{1-s}\left(\frac{4^n\pi^{2n}mX}{\norm(\Dif_{F}^{2}\n\q)}\right).\]
Thus,
\begin{align*}
\Lambda\left(\f\otimes \g,\frac12\right)&=I\left(\frac12,X\right)+I\left(\frac12,X^{-1}\right)\\
&=\norm(\Dif_{F}^{2}\n\q)^{1/2}L_\infty\left(\f\otimes \g,\frac12\right)\\
&\hspace{.7in}\times\sum_{m=1}^\infty\frac{b_{m}^{\n\q}(\f\otimes\g)}{\sqrt{m}}\left(V_{1/2}\left(\frac{4^n\pi^{2n}m}{X\norm(\Dif_{F}^{2}\n\q)}\right)+V_{1/2}\left(\frac{4^n\pi^{2n}mX}{\norm(\Dif_{F}^{2}\n\q)}\right)\right).
\end{align*}
Finally, taking $X=1$ leads us to the following proposition.
\begin{proposition}\label{approximate-functional-equation}
Let $G(u)$ be a holomorphic function on an open set containing the strip $|\Re(u)|\leq 3/2$ and bounded therein, satisfying $G(u)=G(-u)$ and $G(0)=1$. Then we have 
\[ L\left(\f\otimes \g,\frac12\right)=2\sum_{m=1}^\infty\frac{b_{m}^{\n\q}(\f\otimes\g)}{\sqrt{m}}V_{1/2}\left(\frac{4^n\pi^{2n}m}{\norm(\Dif_{F}^{2}\n\q)}\right),\]
where $V_{1/2}(y)$ is defined as in (\ref{eqn:v}).

Moreover, the derivatives of $V_{1/2}(y)$ satisfy 
\begin{equation}\label{eqn:V-estimate1}y^{a}V_{1/2}^{(a)}(y)\ll\left(1+\frac{y}{\prod_{j=1}^{n}k_{j}^{2}}\right)^{-A}\end{equation}
and
\begin{equation}\label{eqn:V-estimate2}
y^{a}V_{1/2}^{(a)}(y)=\delta_{a}+O\left(\left(\frac{y}{\prod_{j=1}^{n}k_{j}^{2}}\right)^{\alpha}\right)\end{equation}
for some $0<\alpha\leq1$, where $\delta_{0}=1$, $\delta_{a}=0$ if $a>0$ and the implied constants depend on $a$, $A$ and $\alpha$.
\end{proposition}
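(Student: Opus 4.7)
The identity is essentially contained in the derivation preceding the statement: Cauchy's residue theorem applied to the integral defining $I(s,X)$ picks up $\Lambda(\f\otimes\g,s)$ as the residue at the simple pole $u=0$, and the functional equation~\eqref{functional-equation} converts the contour integral along $\Re(u)=-3/2$ into $I(1-s,X^{-1})$. Specializing $s=1/2$, $X=1$, and dividing through by $\norm(\Dif_F^2\n\q)^{1/2}L_\infty(\f\otimes\g,1/2)$ yields the stated series. The substantive content of the proposition is therefore the two bounds on $V_{1/2}^{(a)}(y)$.

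Differentiating under the integral sign gives
\[
 y^a V_{1/2}^{(a)}(y) = \frac{(-1)^a}{2\pi i}\int_{(3/2)} P_a(u)\, y^{-u}\,\gamma(1/2,u)\,G(u)\,\frac{du}{u},
\]
where $P_a(u)=u(u+1)\cdots(u+a-1)$; hence $P_a(u)/u$ is entire for $a\ge 1$ and has a simple pole at $u=0$ with residue $1$ for $a=0$. The key analytic input is a Stirling estimate: on vertical lines $\Re(u)=\sigma$ in a fixed strip, the ratio $\Gamma(w+u)/\Gamma(w)\asymp w^{u}$ applied place by place with $w$ of size $k_j$ gives
\[
 |\gamma(1/2,\sigma+it)| \;\ll_{\sigma}\; \Bigl(\prod_{j=1}^n k_j^{\,2}\Bigr)^{\sigma}\psi(|t|),
\]
where $\psi$ has enough decay in $|t|$ (from the exponential decay of $\Gamma$ in the imaginary direction) to guarantee absolute convergence of every contour integral appearing below; the boundedness of $G$ and the polynomial nature of $P_a$ have no effect on this.

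To prove~\eqref{eqn:V-estimate1}, I shift the contour rightward to $\Re(u)=A$ for arbitrary $A\ge 0$: no poles are crossed, so the Stirling bound combined with $|y^{-u}|=y^{-A}$ gives $y^a V_{1/2}^{(a)}(y)\ll(\prod_j k_j^{\,2}/y)^{A}$, and combined with the trivial bound at $A=0$ this yields the desired decay $(1+y/\prod_j k_j^{\,2})^{-A}$. To prove~\eqref{eqn:V-estimate2}, I shift leftward to $\Re(u)=-\alpha$ for a small $\alpha\in(0,1/2)$: the only pole crossed is the simple pole of $P_a(u)/u$ at $u=0$, with residue $\delta_a$, because the poles of $\gamma(1/2,u)$ arising from the numerator Gamma factors lie at $\Re(u)\le -1/2$ (using $k_j,l_j\ge 2$). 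The shifted integral is bounded by $y^{\alpha}\prod_j k_j^{-2\alpha}=(y/\prod_j k_j^{\,2})^{\alpha}$, giving the required error. The main technical obstacle is securing the Stirling bound for $\gamma(1/2,u)$ with the clean polynomial-in-$\prod_j k_j^{\,2}$ dependence uniformly in the weights, which is handled via the asymptotic $\Gamma(w+u)/\Gamma(w)=w^u(1+O(|u|^2/|w|))$ applied at each archimedean place; the remainder of the argument is routine contour shifting.
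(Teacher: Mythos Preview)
Your proof is correct and follows exactly the approach the paper invokes: the paper's entire proof is the one-line citation ``The estimates follow from \cite[Proposition~5.4]{iwaniec-kowalski},'' and your contour-shifting sketch is precisely the argument behind that reference. One small caveat: shifting rightward to $\Re(u)=A$ for arbitrary $A\ge 0$ requires $G$ to be holomorphic and of controlled growth beyond the strip $|\Re(u)|\le 3/2$, which the stated hypotheses do not guarantee; this mismatch is already present between the proposition's hypotheses and its conclusion (and is moot in the application, where the paper sets $G=1$), but strictly speaking only $A\le 3/2$ is licensed by what is assumed.
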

\begin{proof} The estimates follow from \cite[Proposition~5.4]{iwaniec-kowalski}. \end{proof}

\section{Application of Petersson Trace Formula}\label{sec:petersson}
The point of departure in this work is a twisted first moment of the central critical values $L(\f\otimes \g,1/2)$ where $\g$ is  fixed in $\Pi_{\boldsymbol{l}}(\n)$ and $\f$ varies over $\Pi_{\bk}(\q)$. More precisely, for an ideal $\p$ which is either $\ringO_{F}$ or a prime ideal different from $\q$, we study the weighted harmonic average introduced in (\ref{eqn:first-moment}). Upon applying the  approximate functional equation in Proposition~\ref{approximate-functional-equation}, we obtain
\begin{align*} 
&\sum_{\f\in\Pi_{\bk}(\q)}L\left(\f\otimes \g,\frac12\right)C_\f(\p)\omega_{\f}\\
&\hspace{.5in}= \sum_{\f\in\Pi_{\bk}(\q)} 2\sum_{m=1}^\infty\frac{b_{m}^{\n\q}(\f\otimes\g)}{\sqrt{m}}V_{\frac{1}{2}}\left(\frac{4^{n}\pi^{2n}m}{\norm(\Dif_{F}^{2}\n\q)}\right)C_\f(\p)\omega_{\f}\\
&\hspace{.5in}= 2\sum_{m=1}^\infty \frac{1}{\sqrt{m}}V_{\frac{1}{2}}\left(\frac{4^n\pi^{2n}m}{\norm(\Dif_{F}^{2}\n\q)}\right)\sum_{\f\in\Pi_{\bk}(\q)}\omega_{\f}C_\f(\p)\sum_{d^2|m}a_{d}^{\n\q}\sum_{\norm(\m)=m/d^2}C_\f(\m)C_\g(\m)\\
&\hspace{.5in}= 2\sum_{m=1}^\infty \frac{1}{\sqrt{m}}V_{\frac12}\left(\frac{4^n\pi^{2n}m}{\norm(\Dif_{F}^{2}\n\q)}\right)\sum_{d^2|m}a_d^{\n\q}\sum_{\norm(\m)=m/d^2}C_\g(\m)\sum_{\f\in\Pi_{\bk}(\q)}\omega_{\f}C_\f(\m)C_\f(\p)\\
&\hspace{.5in}=2\sum_{\m\subset\ringO_{F}} \frac{C_{\g}(\m)}{\sqrt{\norm(\m)}}\sum_{d=1}^{\infty}\frac{a_{d}^{\n\q}}{d}V_{\frac12}\left(\frac{4^n\pi^{2n}\norm(\m)d^2}{\norm(\Dif_{F}^{2}\n\q)}\right)\sum_{\f\in\Pi_{\bk}(\q)}\omega_{\f}C_\f(\m)C_\f(\p).
\end{align*}

For an ideal $\m$ of $\ringO_{F}$, we write $\m=\nu\a$ for some narrow ideal class representative $\a$ and $\nu\in(\a^{-1})^{+}\mod \ringO_F^{\times+}$. In particular, we write $\p$ as $\p=\xi\b$ for some fixed ideal $\b$ and $\xi\in(\b^{-1})^{+}\mod \ringO_F^{\times+}$. 
At this point we invoke the formula in Proposition \ref{trace-formula} to get
\begin{align*}
&\sum_{\f\in\Pi_{\bk}(\q)}L\left(\f\otimes \g,\frac12\right)C_\f(\p)\omega_\f\\
&=2\sum_{\{\a\}}\sum_{\nu\in(\a^{-1})^{+}/\ringO_{F}^{\times +}} \frac{C_{\g}(\nu\a)}{\sqrt{\norm(\nu\a)}}\sum_{d=1}^{\infty}\frac{a_{d}^{\n\q}}{d}V_{\frac{1}{2}}\left(\frac{4^n\pi^{2n}\norm(\nu\a)d^2}{\norm(\Dif_{F}^{2}\n\q)}\right)\\
&\times\left\{\1_{\xi\b=\nu\a}+C\sum_{\substack{\c^{2}\sim\a\b\\c\in\c^{-1}\q\backslash\{0\}\\\epsilon\in\ringO_{F}^{\times+}/\ringO_{F}^{\times2}}}\frac{{\mathit Kl}(\epsilon\nu,\a; \xi,\b;c,\c)}{\norm(c\c)}\prod_{j=1}^nJ_{k_j-1}\left(\frac{4\pi\sqrt{\epsilon_j\nu_j\xi_j\left[\a\b\c^{-2}\right]_j}}{|c_j|}\right)-(\mathrm{old \; forms})\right\},
\end{align*} 
where $\displaystyle{(\mathrm{old\; forms})=\sum_{\f\in H_{\bk}^{\mathrm{old}}(\q)}\frac{\Gamma(\bk-\boldsymbol{1})}{(4\pi)^{\bk-\boldsymbol{1}}|d_{F}|^{1/2}\left<\f,\f\right>_{S_{\bk}(\q)}}C_{\f}(\nu\a)\overline{C_{\f}(\xi\b)}}$ and $H_{\bk}^{\mathrm{old}}(\q)$ is an orthogonal basis for the space of oldforms $S_{\bk}^{\mathrm{old}}(\q)$. Hence, we can write 
\begin{equation}\label{eqn:twisted-first-moment}\sum_{\f\in\Pi_{\bk}(\q)}L\left(\f\otimes \g,\frac12\right)C_\f(\p)\omega_\f=M_{\p}^{\g}(\bk,\q)+E_{\p}^{\g}(\bk,\q)-E_{\p}^{\g}(\bk,\q,\mathrm{old}),\end{equation} where 
\begin{equation}\label{eqn:main-term}M_{\p}^{\g}(\bk,\q)=2\frac{C_{\g}(\p)}{\sqrt{\norm(\p)}}\sum_{d=1}^{\infty}\frac{a_{d}^{\n\q}}{d}V_{\frac{1}{2}}\left(\frac{4^n\pi^{2n}\norm(\p)d^2}{\norm(\Dif_{F}^{2}\n\q)}\right),\end{equation}

\begin{align}\label{eqn:error-term}E_{\p}^{\g}(\bk,\q)&=2C\sum_{\{\a\}}\sum_{\nu\in(\a^{-1})^{+}/\ringO_{F}^{\times +}} \frac{C_{\g}(\nu\a)}{\sqrt{\norm(\nu\a)}}\sum_{d=1}^{\infty}\frac{a_{d}^{\n\q}}{d}V_{\frac{1}{2}}\left(\frac{4^n\pi^{2n}\norm(\nu\a)d^2}{\norm(\Dif_{F}^{2}\n\q)}\right)\\
&\hspace{.7in}\times\sum_{\substack{\c^{2}\sim\a\b\\c\in\c^{-1}\q\backslash\{0\}\\\epsilon\in\ringO_{F}^{\times+}/\ringO_{F}^{\times2}}}\frac{{\mathit Kl}(\epsilon\nu,\a; \xi,\b;c,\c)}{\norm(c\c)}\prod_{j=1}^nJ_{k_j-1}\left(\frac{4\pi\sqrt{\epsilon_j\nu_j\xi_j\left[\a\b\c^{-2}\right]_j}}{|c_j|}\right)\nonumber\end{align}
and
\begin{align}\label{eqn:old-forms}E_{\p}^{\g}(\bk,\q,\mathrm{old})&=2\sum_{\m\subset\ringO_{F}} \frac{C_{\g}(\m)}{\sqrt{\norm(\m)}}\sum_{d=1}^{\infty}\frac{a_{d}^{\n\q}}{d}V_{\frac{1}{2}}\left(\frac{4^n\pi^{2n}\norm(\m)d^2}{\norm(\Dif_{F}^{2}\n\q)}\right)\\
&\hspace{.7in}\times\sum_{\f\in H_{\bk}^{\mathrm{old}}(\q)}\frac{\Gamma(\bk-\boldsymbol{1})}{(4\pi)^{\bk-\boldsymbol{1}}|d_{F}|^{1/2}\left<\f,\f\right>_{S_{\bk}(\q)}}C_{\f}(\m)\overline{C_{\f}(\p)}.\nonumber\end{align}

In the following section, we prove that, as $\norm(\q)\rightarrow \infty$, we have
\[ M_\p^\g(\bk, \q)\sim \frac{C_{\g}(\p)}{\sqrt{\norm(\p)}}\gamma_{-1}(F)\prod_{\substack{\l|\n \\ \l \,:\,\mathrm{prime}}}(1-\norm(\l)^{-1})\log(\norm(\q)),\]
where as $E_\p^\g(\bk, \q)$ and $E_\p^\g(\bk,\q,\mathrm{old})$ are $O(1)$, which completes the proof of Proposition~\ref{main-proposition}.

\section{Asymptotic Formula for Harmonic Average}\label{sec:average}

\subsection{Main Term $M_{\p}^{\g}(\bk,\q)$}\label{sec:main-term}
In this section we establish an asymptotic estimate (as $\norm(\q)\to\infty$) for the main term given by (\ref{eqn:main-term}). We have 
\begin{align*}&\sum_{d=1}^{\infty}\frac{a_{d}^{\n\q}}{d}V_{1/2}\left(\frac{4^n\pi^{2n}\norm(\p)d^2}{\norm(\Dif_{F}^{2}\n\q)}\right)\\
&\hspace{.5in}=\frac{1}{2\pi i}\sum_{d=1}^{\infty}\frac{a_{d}^{\n\q}}{d}\int_{(3/2)}G(u)\left(\frac{4^n\pi^{2n}\norm(\p)d^2}{\norm(\Dif_{F}^{2}\n\q)}\right)^{-u}
\gamma\left(\frac12, u\right)\;\frac{du}{u}\\
&\hspace{.5in}=\frac{1}{2\pi i}\int_{(3/2)}G(u)\left(\frac{4^n\pi^{2n}\norm(\p)}{\norm(\Dif_{F}^{2}\n\q)}\right)^{-u}
\gamma\left(\frac12, u\right)\;\frac{du}{u},\end{align*}
where $\gamma(1/2, u)$ is as defined in (\ref{eqn:gamma}).
Setting $G(u)=1$ and shifting the contour of integration to $\Re(u)=-1/4$ give 
\begin{align}\label{eqn:diagonal}
&\sum_{d=1}^{\infty}\frac{a_{d}^{\n\q}}{d}V_{1/2}\left(\frac{4^n\pi^{2n}\norm(\p)d^2}{\norm(\Dif_{F}^{2}\n\q)}\right)\\
&\hspace{.5in}=\res_{u=0}\left(\left(\frac{4^n\pi^{2n}\norm(\p)}{\norm(\Dif_{F}^{2}\n\q)}\right)^{-u}
\gamma\left(\frac12, u\right)\frac{\zeta_{F}^{\n\q}(2u+1)}{u}\right) \nonumber \\
&\hspace{1.2in}
+\frac{1}{2\pi i}\int_{(-1/4)}\left(\frac{4^n\pi^{2n}\norm(\p)}{\norm(\Dif_{F}^{2}\n\q)}\right)^{-u}
\gamma\left(\frac12, u\right)\zeta_{F}^{\n\q}(2u+1)\;\frac{du}{u}.\nonumber \end{align}
It can be easily verified that the integral over the vertical line $\Re(u)=-1/4$ on the right hand side of (\ref{eqn:diagonal}) is $O\left(\norm(\q)^{-\frac{1}{4}}\right)$. 
As for the residue at $u=0$, we use the following standard Taylor series expansions 
\begin{eqnarray*}
\frac{\Gamma(a+u)}{\Gamma(a)}&=&1+\frac{\Gamma^\prime(a)}{\Gamma(a)}u+\cdots,\\
\left(\frac{4^n\pi^{2n}\norm(\p)}{\norm(\Dif_{F}^{2}\n\q)}\right)^{-u}&=&1-\log\left(\frac{4^n\pi^{2n}\norm(\p)}{\norm(\Dif_{F}^{2}\n\q)}\right)u+\cdots, \\
\zeta_F(2u+1)&=&\frac{\gamma_{-1}(F)}{2u}+\gamma_0(F)+\cdots,\end{eqnarray*}
along with the identity  $$\zeta_{F}^{\n\q}(2u+1)=\zeta_{F}(2u+1)(1-\norm(\q)^{-2u-1})\prod_{\substack{\l|\n \\ \l\,:\, \text{prime}}}(1-\norm(\l)^{-2u-1}),$$ 
to conclude that 
\begin{align*}
&\res_{u=0}\left(\left(\frac{4^n\pi^{2n}\norm(\p)}{\norm(\Dif_{F}^{2}\n\q)}\right)^{-u}\gamma\left(\frac12, u\right)\frac{\zeta_{F}^{\n\q}(2u+1)}{u}\right)\\
&\hspace{.5in}=\frac{\gamma_{-1}(F)}{2}\prod_{\substack{\l|\n \\ \l \,:\,\mathrm{prime}}}(1-\norm(\l)^{-1})\log(\norm(\q))+D_{\g}+O\left(\frac{\log(\norm(\q))}{\norm(\q)}\right).
\end{align*} 
We mention here that $D_{\g}$ is a constant independent of $\q$ and could be explicitly computed if need be. Therefore, $$M_{\p}^{\g}(\bk,\q)=\frac{C_{\g}(\p)}{\sqrt{\norm(\p)}}\gamma_{-1}(F)\prod_{\substack{\l|\n \\ \l \,:\,\mathrm{prime}}}(1-\norm(\l)^{-1})\log(\norm(\q))+O(1),\quad\quad\text{as}\; \norm(\q)\to\infty.$$

\subsection{Error Term $E_{\p}^{\g}(\bk,\q)$}\label{sec:error-term}
In order to give an asymptotic estimate for the error term $E_\p^\g(\bk,\q)$ given in (\ref{eqn:error-term}), it suffices to consider the expression\begin{align}\label{eqn:error}
E_{\p,\a}^\g(\bk,\q)&=\sum_{\nu\in(\a^{-1})^{+}/ \ringO_{F}^{\times +}} \frac{C_{\g}(\nu\a)}{\sqrt{\norm(\nu\a)}}\sum_{d=1}^{\infty}\frac{a_{d}}{d}V_{1/2}\left(\frac{4^n\pi^{2n}\norm(\nu\a)d^2}{\norm(\Dif_{F}^{2}\n\q)}\right)\\
&\nonumber \hspace{.5in} \times \sum_{c\in\c^{-1}\q\backslash\{0\}/\ringO_{F}^{\times +}}\sum_{\eta\in\ringO_{F}^{\times+}}\frac{{\mathit Kl}(\nu,\a; \xi,\b;c\eta,\c)}{|\norm(c)|}\prod_{j=1}^{n}J_{k_j-1}\left(\frac{4\pi\sqrt{\nu_j\xi_j\left[\a\b\c^{-2}\right]_j}}{\eta_{j}|c_j|}\right). \nonumber
\end{align} 
for any ideal class representative $\a$, while fixing an ideal class respresentative $\c$ such that $\c^{2}\sim\a\b$ and ignoring the (finite) sum over $\epsilon$. By Lemma~\ref{normlemmatrotabas}, we may assume that the representatives $\nu\in(\a^{-1})^{+}/ \ringO_{F}^{\times +}$  and $c\in\c^{-1}\q\backslash\{0\}/\ringO_{F}^{\times +}$ in (\ref{eqn:error}) satisfy \begin{equation}\label{eqn:norms}
\norm(\nu)^{1/n}\ll\nu_{j}\ll\norm(\nu)^{1/n}\quad \text{and} \quad |\norm(c)|^{1/n}\ll|c_{j}|\ll|\norm(c)|^{1/n},\quad \forall\; j\in\{1,\cdots,n\}.\end{equation}

We obtain an upper bound for $E_{\p,\a}^{\g}(k,\q)$ as $\norm(\q)\to\infty$ by applying the estimates for the $J$-Bessel function and the Kloosterman sum given in (\ref{J-Bessel0}) and (\ref{weilbound}). In particular, the values of the $J$-Bessel function in (\ref{eqn:error}) are bounded as follows. We take $\delta_{j}=0$ if $\eta_{j}\geq1$, and otherwise $\delta_{j}=\delta$ for some fixed (sufficiently small) $\delta>0$. With this choice of ${\boldsymbol{\delta}}=(\delta_{j})$, we have
\begin{align}\label{eqn:J-Bessel1}\prod_{j=1}^nJ_{k_j-1}\left(\frac{4\pi\sqrt{\nu_j\xi_j\left[\a\b\c^{-2}\right]_j}}{\eta_{j}|c_j|}\right)&\ll\prod_{j=1}^{n}\left(\frac{\sqrt{\nu_{j}\xi_{j}\left[\a\b\c^{-2}\right]_{j}}}{\eta_{j}|c_j|}\right)^{1-\delta_{j}}\\&=\left(\frac{\sqrt{\boldsymbol{\nu\xi\left[\a\b\c^{-2}\right]}}}{\boldsymbol{\eta|c|}}\right)^{\boldsymbol{1}-\boldsymbol{\delta}}.\nonumber\end{align} 
This allows us to control the internal sum in (\ref{eqn:error}) over all $\eta\in\ringO_{F}^{\times +}$ since (thanks again to the work of Luo \cite[p.\:136]{luo1}) 
\begin{equation}\label{eqn:unit}
\sum_{\eta\in\ringO_{F}^{\times+}}\prod_{\eta_{j}<1}\eta_{j}^{\delta}<\infty.
\end{equation}
Upon applying the bounds (\ref{weilbound}) and (\ref{eqn:J-Bessel1}), we get
\begin{align*}E_{\p,\a}^{\g}(\bk,\q)&\ll\sum_{\nu\in(\a^{-1})^{+}/\ringO_{F}^{\times +}} \frac{|C_{\g}(\nu\a)|}{\sqrt{\norm(\nu\a)}}\sqrt{\boldsymbol{\nu}}^{\boldsymbol{1}-\boldsymbol{\delta}}\sum_{d=1}^{\infty}\frac{a_{d}^{\n\q}}{d}\left|V_{1/2}\left(\frac{4^n\pi^{2n}\norm(\nu\a)d^2}{\norm(\Dif_{F}^{2}\n\q)}\right)\right|\\
&\hspace{.7in}\times\sum_{\eta\in\ringO_{F}^{\times+}}\boldsymbol{\eta}^{\boldsymbol{\delta}}\sum_{c\in\c^{-1}\q\backslash\{0\}/\ringO_{F}^{\times+}}|\boldsymbol{c}|^{\boldsymbol{\delta}-\boldsymbol{1}}\frac{\norm\left((\nu\a,\xi\b,c\c)\right)^{\frac{1}{2}}\tau(c\c)}{\sqrt{\norm(c\c)}}.\end{align*} 
Using the estimates (\ref{eqn:tau}), (\ref{eqn:norms}) and (\ref{eqn:unit}), we see that
\[E_{\p,\a}^{\g}(\bk,\q)\ll\sum_{\nu\in(\a^{-1})^{+}/\ringO_{F}^{\times +}} |C_{\g}(\nu\a)|\sum_{d=1}^{\infty}\frac{a_{d}^{\n\q}}{d}\left|V_{1/2}\left(\frac{4^n\pi^{2n}\norm(\nu\a)d^2}{\norm(\Dif_{F}^{2}\n\q)}\right)\right|\sum_{\c\subset\q}\frac{\norm\left((\nu\a,\xi\b,\c)\right)^{\frac{1}{2}}}{\norm(\c)^{\frac{3}{2}-\delta}}.\] 
On the other hand, we have (see \cite[p.\:228]{trotabas})
$$\sum_{\c\subset\q}\frac{\norm\left((\nu\a,\xi\b,\c)\right)^{\frac{1}{2}}}{\norm(\c)^{\frac{3}{2}-\delta}}\ll\frac{\norm\left((\nu\a,\xi\b,\q)\right)^{\frac{1}{2}}}{\norm(\q)^{\frac{3}{2}-\delta}}\norm\left((\nu\a,\xi\b)\right)^{\delta}.$$ 
Hence, 
\begin{align*}E_{\p,\a}^{\g}(\bk,\q)&\ll\norm(\q)^{-\frac{3}{2}+\delta}\sum_{\nu\in(\a^{-1})^{+}/\ringO_{F}^{\times +}} |C_{\g}(\nu\a)|\sum_{d=1}^{\infty}\frac{a_{d}^{\n\q}}{d}\left|V_{1/2}\left(\frac{4^n\pi^{2n}\norm(\nu\a)d^2}{\norm(\Dif_{F}^{2}\n\q)}\right)\right|\\
&\ll\norm(\q)^{-\frac{3}{2}+\delta}\left(\sum_{\substack{\nu\in(\a^{-1})^{+}/\ringO_{F}^{\times +}\\ \norm(\nu\a)\ll\norm(\q)}} |C_{\g}(\nu\a)|+\sum_{\substack{\nu\in(\a^{-1})^{+}/\ringO_{F}^{\times +}\\ \norm(\nu\a)\gg\norm(\q)}}|C_{\g}(\nu\a)|\left(\frac{\norm(\nu\a)}{\norm(\q)}\right)^{-A}\right),\end{align*}
where the last inequality is obtained by using the estimates (\ref{eqn:V-estimate1}) and (\ref{eqn:V-estimate2}). Given $\epsilon>0$, it follows from the Ramanujan bound (\ref{eqn:ramanujan-bound}) that both sums are $O\left(\norm(\q)^{1+\epsilon}\right)$. Therefore, \[E_{\p,\a}^{\g}(\bk,\q)= O\left(\norm(\q)^{-\frac{1}{2}+\delta+\epsilon}\right),\quad\quad \text{as}\;\norm(\q)\to\infty.\]

\subsection{Contribution of Old Forms $E_{\p}^{\g}(\bk,\q,\mathrm{old})$}\label{sec:old-forms}
  Let us first describe an orthogonal basis for the space of oldforms in $S_{\bk}(\q)$ following the treatment in \cite[Section~11]{trotabas}. For $\f\in \Pi_{\bk}(\ringO_{F})$ and $g\in GL_{2}(\mathbb{A}_{F})$, we set $$\f_{\q}(g)=\left(\frac{\norm(\q)}{\rho_{\f}(\q)}\right)^{\frac{1}{2}}\sum_{\d\e=\q}\frac{\mu(\d)C_{\f}(\d)}{\psi(\d)\norm(\d)}\norm(\e)^{-\frac{1}{2}}\f\left(g\left[ {\begin{array}{cc}
   \mathrm{id}(\e)^{-1} & 0 \\
   0 & 1 \\
  \end{array} } \right]\right),$$ where $\displaystyle{\rho_{\f}(\q)=\prod_{\l|\q}\left(1-\norm(\l)\left(\frac{C_{\f}(\l)}{\norm(\l)+1}\right)^{2}\right)}$, $\displaystyle{\psi(\d)=\prod_{\l|\d}(1+\norm(\l)^{-1})}$, $\mu(\d)$ is the generalized  Mobius function for number fields and $\mathrm{id}(\e)$ is the idele of $F$ associated with the ideal $\e$. The set $\{\f,\f_{\q}\}_{\f\in \Pi_{k}(\ringO_{F})}$ is an orthogonal basis for $S_{k}^{\mathrm{old}}(\q)$ with $$\left<\f_{\q},\f_{\q}\right>_{S_{\bk}(\q)}=\left<\f,\f\right>_{S_{\bk}(\q)}=(\norm(\q)+1)\left<\f,\f\right>_{S_{\bk}(\ringO_{F})}.$$ Moreover, the Fourier coefficients of $\f_{\q}$ are given by 
\begin{equation}\label{eqn:fourier-coefficients}C_{\f_{\q}}(\m)=\left(\norm(\q)(1-\norm(\q)^{-2})(1+\norm(\q)^{-1})L_{\q}(\mathrm{sym}^{2}\f,1)\right)^{\frac{1}{2}}\left(-\frac{C_{\f}(\q)C_{\f}(\m)}{\norm(\q)+1}+C_{\f}(\m\q^{-1})\1_{\q|\m}\right),\end{equation} 
where $L_{\q}(\mathrm{sym}^{2}\f,1)$ is the Euler factor at $\q$ of the symmetric square $L$-function of $\f$. 

The rest of this section is devoted to show that the contribution of the oldforms given by $E_{\p}^{\g}(\bk,\q,\mathrm{old})$ in (\ref{eqn:old-forms}) satisfies \begin{equation}\label{eqn:final-claim}E_{\p}^{\g}(\bk,\q,\mathrm{old})\ll\norm(\q)^{-\frac{1}{2}+\epsilon},\quad\quad\text{as}\; \norm(\q)\to\infty.\end{equation} In view of the above discussion, we can write $E_{\p}^{\g}(\bk,\q,\mathrm{old})$ as
$$E_{\p}^{\g}(\bk,\q,\mathrm{old})=\frac{2\Gamma(\bk-\boldsymbol{1})}{(4\pi)^{\bk-\boldsymbol{1}}|d_{F}|^{1/2}}(E_{1}+E_{2}),$$
where $$E_{1}=\sum_{\m\subset\ringO_{F}} \frac{C_{\g}(\m)}{\sqrt{\norm(\m)}}\sum_{d=1}^{\infty}\frac{a_{d}^{\n\q}}{d}V_{\frac{1}{2}}\left(\frac{4^n\pi^{2n}\norm(\m)d^2}{\norm(\Dif_{F}^{2}\n\q)}\right)\sum_{\f\in \Pi_{k}(\ringO_{F})}\frac{C_{\f}(\m)C_{\f}(\p)}{\left<\f,\f\right>_{S_{\bk}(\q)}}$$ and 
$$E_{2}=\sum_{\m\subset\ringO_{F}} \frac{C_{\g}(\m)}{\sqrt{\norm(\m)}}\sum_{d=1}^{\infty}\frac{a_{d}^{\n\q}}{d}V_{\frac{1}{2}}\left(\frac{4^n\pi^{2n}\norm(\m)d^2}{\norm(\Dif_{F}^{2}\n\q)}\right)\sum_{\f\in \Pi_{\bk}(\ringO_{F})}\frac{C_{\f_{\q}}(\m)C_{\f_{\q}}(\p)}{\left<\f_{\q},\f_{\q}\right>_{S_{\bk}(\q)}}.$$
 Notice that 
\begin{equation}\label{eqn:level1} \sum_{\f\in \Pi_{\bk}(\ringO_{F})}\frac{C_{\f}(\m)C_{\f}(\p)}{\left<\f,\f\right>_{S_{\bk}(\q)}}\leq\frac{1}{1+\norm(\q)}\sum_{\f\in \Pi_{\bk}(\ringO_{F})}\frac{\left|C_{\f}(\m)C_{\f}(\p)\right|}{\left<\f,\f\right>_{S_{\bk}(\ringO_{F})}}\ll\norm(\q)^{-1}\norm(\m)^{\epsilon}.
\end{equation}
Using (\ref{eqn:level1}) and the estimates (\ref{eqn:V-estimate1}) and (\ref{eqn:V-estimate2}), we obtain
\begin{align*}E_{1}&\ll\norm(\q)^{-1}\sum_{\m\subset\ringO_{F}} |C_{\g}(\m)|\norm(\m)^{\epsilon-\frac{1}{2}}\sum_{d=1}^{\infty}\frac{a_{d}^{\n\q}}{d}\left|V_{1/2}\left(\frac{4^n\pi^{2n}\norm(\m)d^2}{\norm(\Dif_{F}^{2}\n\q)}\right)\right|\\
&\ll\norm(\q)^{-1}\left(\sum_{\substack{\m\subset\ringO_{F}\\\norm(\m)\ll\norm(\q)}} |C_{\g}(\m)|\norm(\m)^{\epsilon-\frac{1}{2}}+\sum_{\substack{\m\subset\ringO_{F}\\\norm(\m)\gg\norm(\q)}}|C_{\g}(\m)|\norm(\m)^{\epsilon-\frac{1}{2}}\left(\frac{\norm(\m)}{\norm(\q)}\right)^{-A}\right)\\
&\ll\norm(\q)^{-\frac{1}{2}+\epsilon},
\end{align*} 
where the last inequality follows from (\ref{eqn:ramanujan-bound}).

Finally, we consider the contribution of the forms $\f_{\q}$ for $\f\in\Pi_{\bk}(\ringO_{F})$. We apply the identity (\ref{eqn:fourier-coefficients}) along with the bound $L_{\q}(\mathrm{sym}^{2}\f,1)\ll1$ (as $\norm(q)\to\infty$) to get
\[\sum_{\f\in \Pi_{\bk}(\ringO_{F})}\frac{C_{\f_{\q}}(\m)C_{\f_{\q}}(\p)}{\left<\f_{\q},\f_{\q}\right>_{S_{\bk}(\q)}}\ll\sum_{\f\in \Pi_{\bk}(\ringO_{F})}\frac{C_{\f}(\q)^{2}|C_{\f}(\m)C_{\f}(\p)|}{(\norm(\q)+1)^2\left<\f,\f\right>_{S_{\bk}(\ringO_{F})}}+\1_{\q|\m}\sum_{\f\in \Pi_{\bk}(\ringO_{F})}\frac{|C_{\f}(\m\q^{-1})C_{\f}(\q)C_{\f}(\p)|}{(\norm(\q)+1)\left<\f,\f\right>_{S_{\bk}(\ringO_{F})}}.\]
Hence, we have \begin{align}\label{eqn:l-divides-m}
E_{2}&\ll\sum_{\m\subset\ringO_{F}} \frac{|C_{\g}(\m)|}{\sqrt{\norm(\m)}}\sum_{d=1}^{\infty}\frac{a_{d}^{\n\q}}{d}\left|V_{1/2}\left(\frac{4^n\pi^{2n}\norm(\m)d^2}{\norm(\Dif_{F}^{2}\n\q)}\right)\right|\sum_{\f\in \Pi_{\bk}(\ringO_{F})}\frac{C_{\f}(\q)^{2}|C_{\f}(\m)C_{\f}(\p)|}{(\norm(\q)+1)^2\left<\f,\f\right>_{S_{\bk}(\ringO_{F})}}\nonumber\\
&\hspace{.5in}+\sum_{\substack{\m\subset\ringO_{F}\\\q|\m}} \frac{|C_{\g}(\m)|}{\sqrt{\norm(\m)}}\sum_{d=1}^{\infty}\frac{a_{d}^{\n\q}}{d}\left|V_{1/2}\left(\frac{4^n\pi^{2n}\norm(\m)d^2}{\norm(\Dif_{F}^{2}\n\q)}\right)\right|\sum_{\f\in \Pi_{\bk}(\ringO_{F})}\frac{|C_{\f}(\m\q^{-1})C_{\f}(\q)C_{\f}(\p)|}{(\norm(\q)+1)\left<\f,\f\right>_{S_{\bk}(\ringO_{F})}}.\end{align}
After writing the sum (\ref{eqn:l-divides-m}) as
$$\sum_{\m\subset\ringO_{F}} \frac{|C_{\g}(\m\q)|}{\sqrt{\norm(\m\q)}}\sum_{d=1}^{\infty}\frac{a_{d}^{\n\q}}{d}\left|V_{1/2}\left(\frac{4^n\pi^{2n}\norm(\m)d^2}{\norm(\n)}\right)\right|\sum_{\f\in \Pi_{\bk}(\ringO_{F})}\frac{|C_{\f}(\m)C_{\f}(\q)C_{\f}(\p)|}{(\norm(\q)+1)\left<\f,\f\right>_{S_{\bk}(\ringO_{F})}},$$ we get
\begin{align*}E_{2}&\ll\frac{\norm(\q)^{\epsilon}}{(\norm(\q)+1)^{2}}\sum_{d=1}^{\infty}\frac{a_{d}^{\n\q}}{d}\sum_{\m\subset\ringO_{F}} |C_{\g}(\m)|\norm(\m)^{\epsilon-\frac{1}{2}}
\left|V_{1/2}\left(\frac{4^n\pi^{2n}\norm(\m)d^2}{\norm(\Dif_{F}^{2}\n\q)}\right)\right|\\
&\hspace{.5in}+\frac{1}{\norm(\q)+1}\sum_{d=1}^{\infty}\frac{a_{d}^{\n\q}}{d}\sum_{\m\subset\ringO_{F}} |C_{\g}(\m\q)|\norm(\m\q)^{\epsilon-\frac{1}{2}}\left|V_{1/2}\left(\frac{4^n\pi^{2n}\norm(\m)d^2}{\norm(\n)}\right)\right|.\end{align*}
Once again we use the estimates (\ref{eqn:V-estimate1}) and (\ref{eqn:V-estimate2}) to get
\begin{align*}&\sum_{d=1}^{\infty}\frac{a_{d}^{\n\q}}{d}\sum_{\m\subset\ringO_{F}} |C_{\g}(\m)|\norm(\m)^{\epsilon-\frac{1}{2}}\left|V_{1/2}\left(\frac{4^n\pi^{2n}\norm(\m)d^2}{\norm(\Dif_{F}^{2}\n\q)}\right)\right|\\
&\hspace{.3in}\ll\sum_{d=1}^{\infty}\frac{a_{d}^{\n\q}}{d}\left(\sum_{\substack{\m\subset\ringO_{F}\\\norm(\m)\ll d^{-2}\norm(\q)}} |C_{\g}(\m)|\norm(\m)^{\epsilon-\frac{1}{2}}+\sum_{\substack{\m\subset\ringO_{F}\\\norm(\m)\gg d^{-2}\norm(\q)}} |C_{\g}(\m)|\norm(\m)^{\epsilon-\frac{1}{2}}\left(\frac{\norm(\m)d^2}{\norm(\q)}\right)^{-A}\right)\\
&\hspace{.3in}\ll\norm(\q)^{\frac{1}{2}+\epsilon},\end{align*} 
and 
\begin{align*}&\sum_{d=1}^{\infty}\frac{a_{d}^{\n\q}}{d}\sum_{\m\subset\ringO_{F}} |C_{\g}(\m\q)|\norm(\m\q)^{\epsilon-\frac{1}{2}}\left|V_{1/2}\left(\frac{4^n\pi^{2n}\norm(\m)d^2}{\norm(\n)}\right)\right|\\
&\hspace{.3in}\ll\sum_{d=1}^{\infty}\frac{a_{d}^{\n\q}}{d}\left(\sum_{\substack{\m\subset\ringO_{F}\\\norm(\m)\ll d^{-2}}} |C_{\g}(\m\q)|\norm(\m\q)^{\epsilon-\frac{1}{2}}+\sum_{\substack{\m\subset\ringO_{F}\\\norm(\m)\gg d^{-2}}} |C_{\g}(\m\q)|\norm(\m\q)^{\epsilon-\frac{1}{2}}(\norm(\m)d^{2})^{-A}\right)\\
&\hspace{.3in}\ll\norm(\q)^{\epsilon-\frac{1}{2}}.\end{align*} Therefore, we have $E_{2}\ll\norm(\q)^{\epsilon-\frac{3}{2}}$, as $\norm(\q)\to\infty$, which concludes the proof of (\ref{eqn:final-claim}).

\section*{Acknowledgements}
The authors would like to express their gratitude to Amir Akbary for reading the manuscript and providing valuable suggestions which improved the exposition of the paper. The authors would also like to thank Wenzhi Luo, M. Ram Murty and John Voight for useful discussions related to this work. 
\bibliographystyle{amsplain}
\bibliography{references}

\end{document}